\documentclass[a4paper,11pt,reqno]{amsart}

\usepackage{amssymb,latexsym,amsthm}
\usepackage{amsmath} 
\usepackage{amsfonts}
\usepackage{graphicx}
\usepackage{color}
\usepackage{enumitem}
\usepackage[numbers,comma,square,sort&compress]{natbib}
\usepackage{hyperref} 
\usepackage[text={5.5in,9.5in},centering]{geometry}

\newtheorem{lemma}{Lemma}[section]

\newtheorem{definition}{Definition}[section]

\newtheorem{proposition}{Proposition}[section]

\newtheorem{theorem}{Theorem}[section]
\newtheorem*{claim*}{Claim}

\title[{\tiny Synchronization in minimal iterated function systems}]{Synchronization in minimal iterated function systems on compact manifolds}

\author[{\tiny{Ale Jan Homburg}}]{Ale Jan Homburg}
\address{KdV Institute for Mathematics, University of Amsterdam, Science park 105-107, 1098 XG Amsterdam, Netherlands}
\address{Department of Mathematics, VU University Amsterdam, De Boelelaan 1081, 1081 HV Amsterdam, Netherlands}
\email{a.j.homburg@uva.nl}

\begin{document}

\begin{abstract}
We treat synchronization for iterated function systems generated by diffeomorphisms on compact manifolds.
Synchronization here means the convergence of orbits starting at different initial conditions when
iterated by the same sequence of diffeomorphisms.
The iterated function systems admit a description as skew 
product systems of diffeomorphisms on compact manifolds  
driven by shift operators. 
Under open conditions including transitivity 
and negative fiber Lyapunov exponents, we prove the existence of a unique 
attracting invariant graph for the skew product system.  
This explains the occurrence of synchronization.  The result extends previous results for iterated function systems by diffeomorphisms on the circle, to arbitrary compact manifolds.


%

\noindent MSC 37C05, 37D30
\end{abstract}

\maketitle

\section{Introduction}\label{s:i}

We consider iterated function systems generated by finitely many diffeomorphisms on a compact manifold.
We thus consider compositions of diffeomorphisms $f_0,\ldots,f_k$ acting on a compact manifold $M$.
For each iterate a diffeomorphism $f_i$ is picked independently with a given probability $p_i$. 
Our focus will lie on the combination of two properties:
\begin{enumerate}[label=(\roman*)]
\item the iterated function systems are minimal, meaning that
for each point there is a sequence of diffeomorphisms that gives a dense orbit in the manifold;
\item
the iterated function systems displays synchronization,
meaning that typically orbits converge to each other when iterated by the same sequence of diffeomorphisms.
\end{enumerate}
We moreover demand robustness where the properties persist under small perturbations of the generating diffeomorphisms 

Minimal iterated function systems on compact manifolds have been constructed before in \citep{homnas,ifs,bbd}. 
Synchronization for minimal iterated function systems on compact manifolds has been established before only for
iterated function systems on the circle \citep{ant84}, see also \citep{MR2358052,1086.37026,kai93,MR2425065,mal17}.

\begin{figure}[htbp]

\begin{center}
 \includegraphics[height=4.5cm]{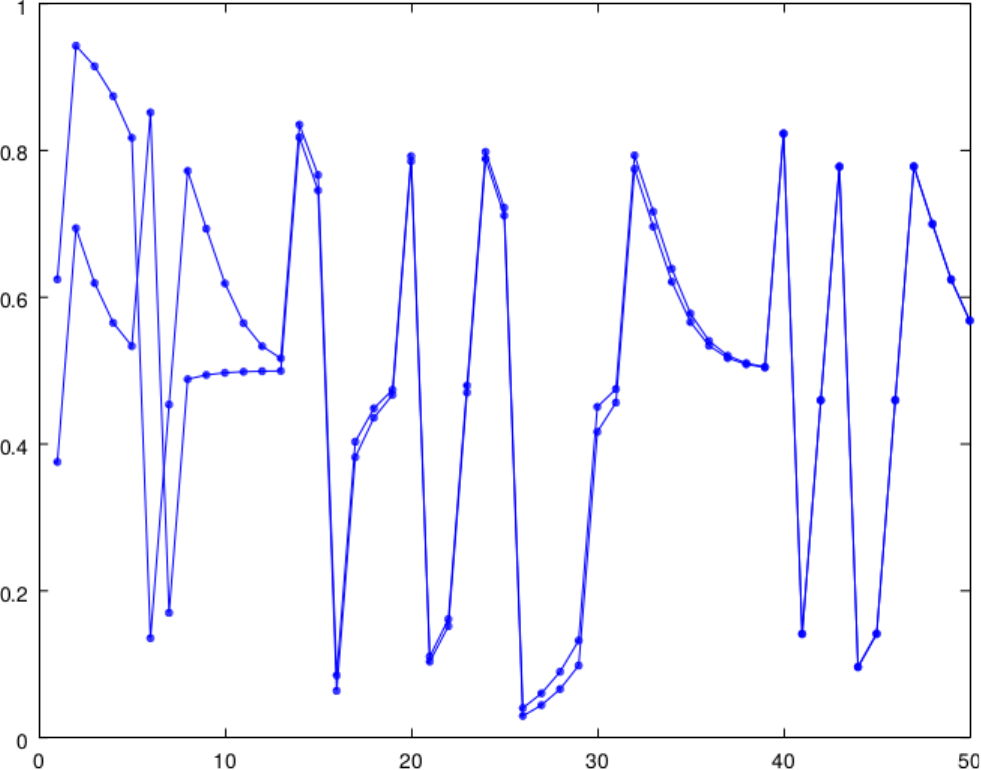}\hspace{0.4cm} \includegraphics[height=4.5cm]{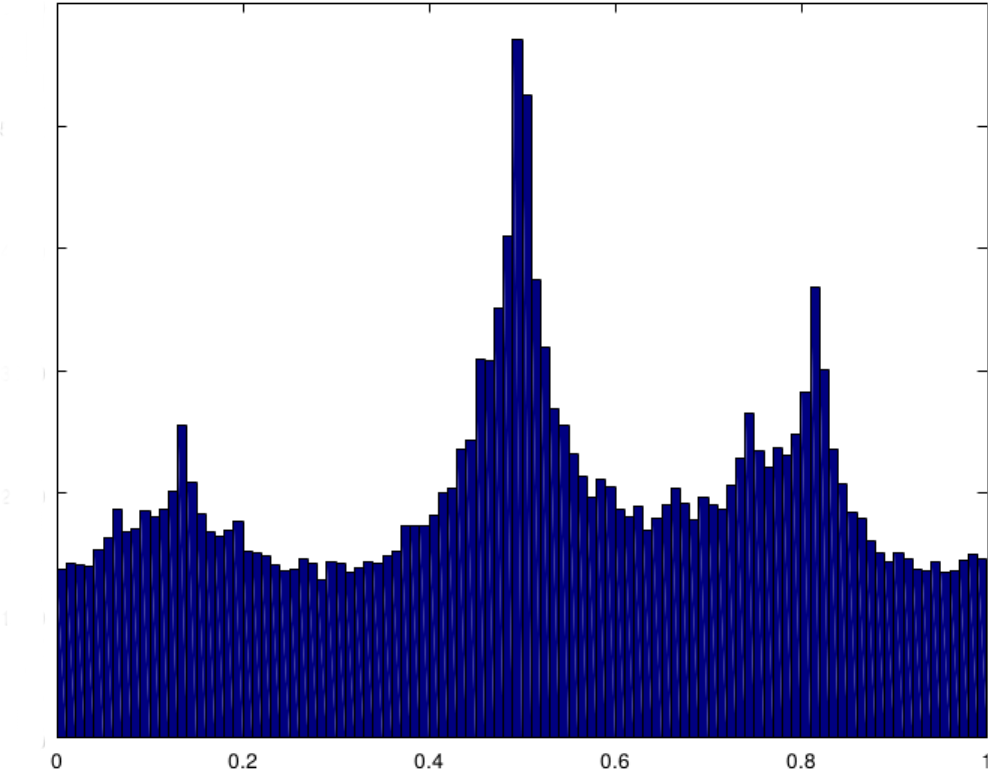} 
\caption{The left frame shows numerically computed time series for the iterated function system generated by
$f_0 (x) = x + \frac{1}{4\pi} \sin (2\pi x)$  (chosen with probability $p_0 = 0.5$)
and $f_1 (x) = x + \frac{1}{\pi} \mod 1$ (chosen with probability $p_1 = 0.5$).
Time series with two different initial conditions but identical compositions are shown, where the points are connected by lines.
The orbits appear to converge to each other.
The right frame shows the histogram of a numerically computed orbit, that appears to lie dense in $[0,1]$.
\label{f:synchonT}}
\end{center}

\end{figure}
For the purpose of illustration we describe an example.
Let $f_0$ be a Morse-Smale diffeomorphism on the circle with a unique attracting and a unique repelling fixed point.
Let $f_1$ be a smooth diffeomorphism on the circle with irrational rotation number, so that its orbits are dense.
Consider the iterated function system generated by $f_0, f_1$, were $f_0,f_1$ are picked independently with positive probabilities
$p_0, p_1 = 1-p_0$. This iterated function system is clearly minimal. It is not difficult to demonstrate that iterated functions 
systems generated by
diffeomorphisms that are $C^1$-small perturbations of $f_0,f_1$ are also minimal.
It has been established that such iterated function systems display synchronization, as illustrated in Figure~\ref{f:synchonT}.

Aim of this paper is to provide constructions of minimal iterated function systems that display synchronization, in a robust way, on any compact manifold. 

Figure~\ref{f:synchonT2} illustrates a two dimensional example: it shows synchronization of a minimal iterated function system on the two dimensional torus. 
At the end of this paper in Proposition~\ref{p:example} we  establish  minimality and synchronization of this iterated function system 
and of $C^1$-small perturbations of it. Analogous examples can be given for iterated function systems on higher dimensional tori.
%

\begin{figure}[htbp]

\begin{center}
 \includegraphics[height=3.2cm]{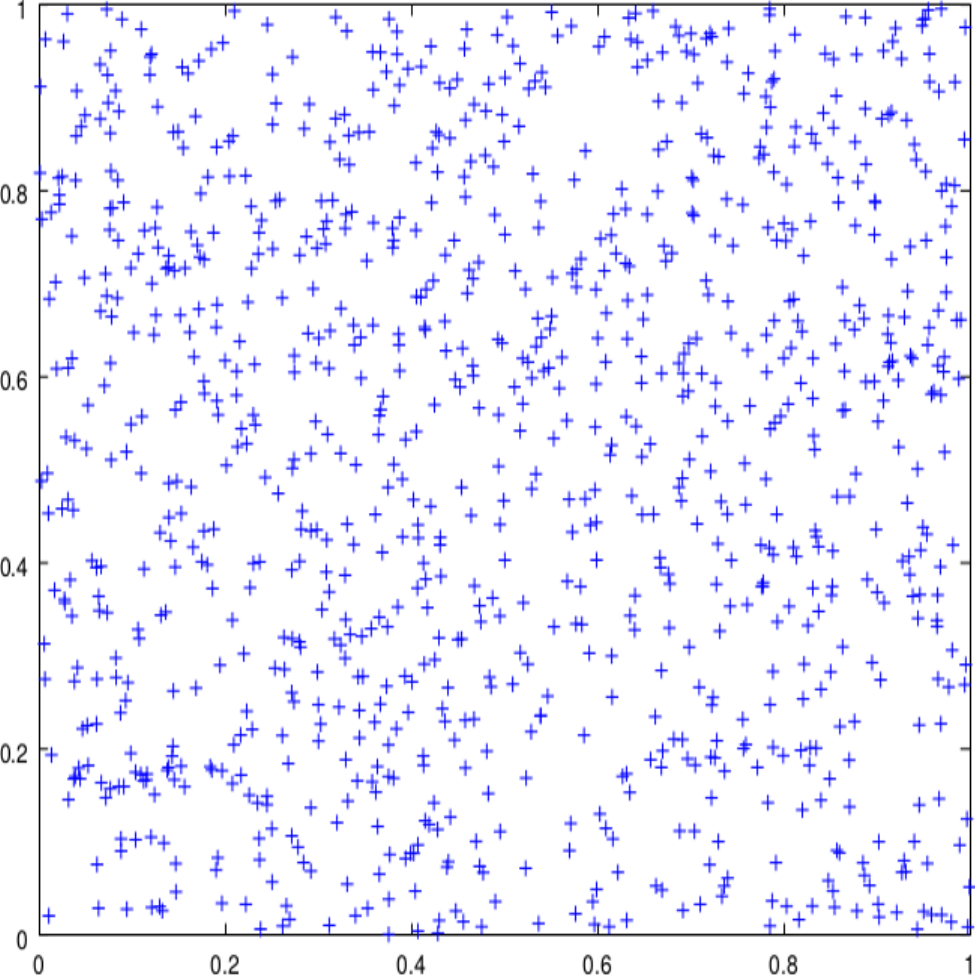}\hspace{0.2cm} \includegraphics[height=3.2cm]{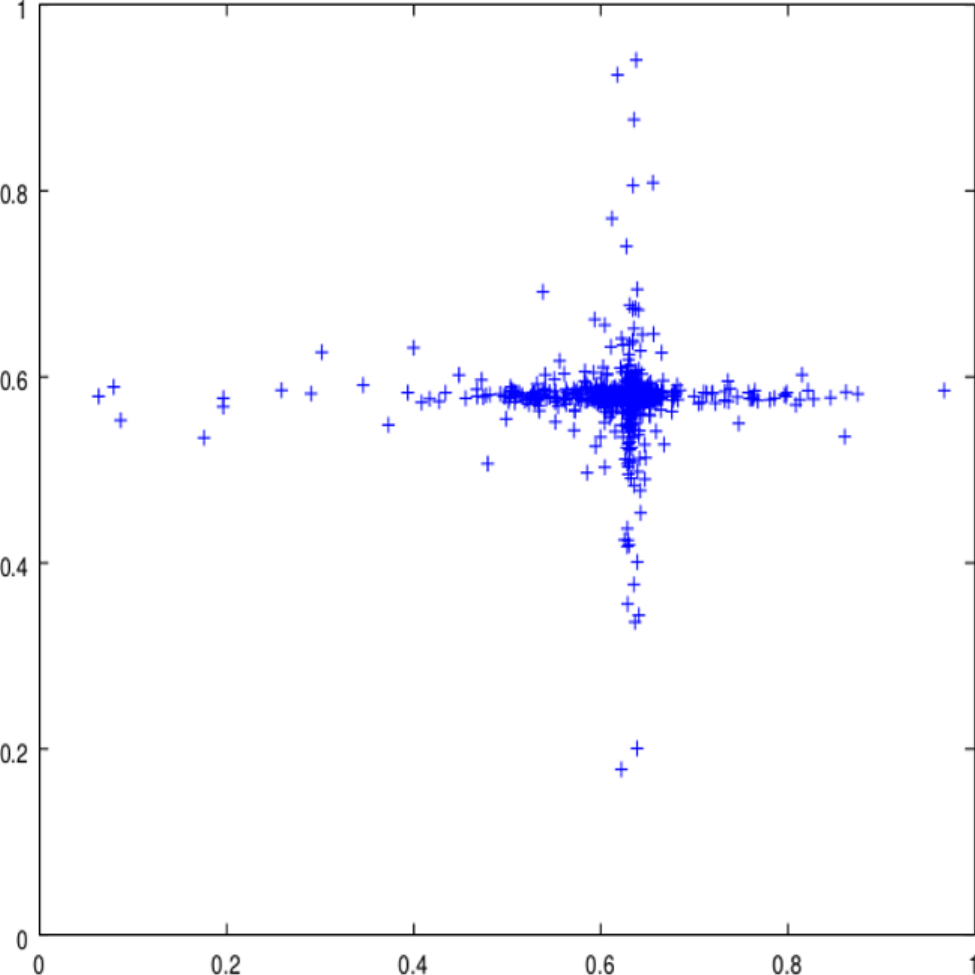} 
 \hspace{0.2cm} \includegraphics[height=3.2cm]{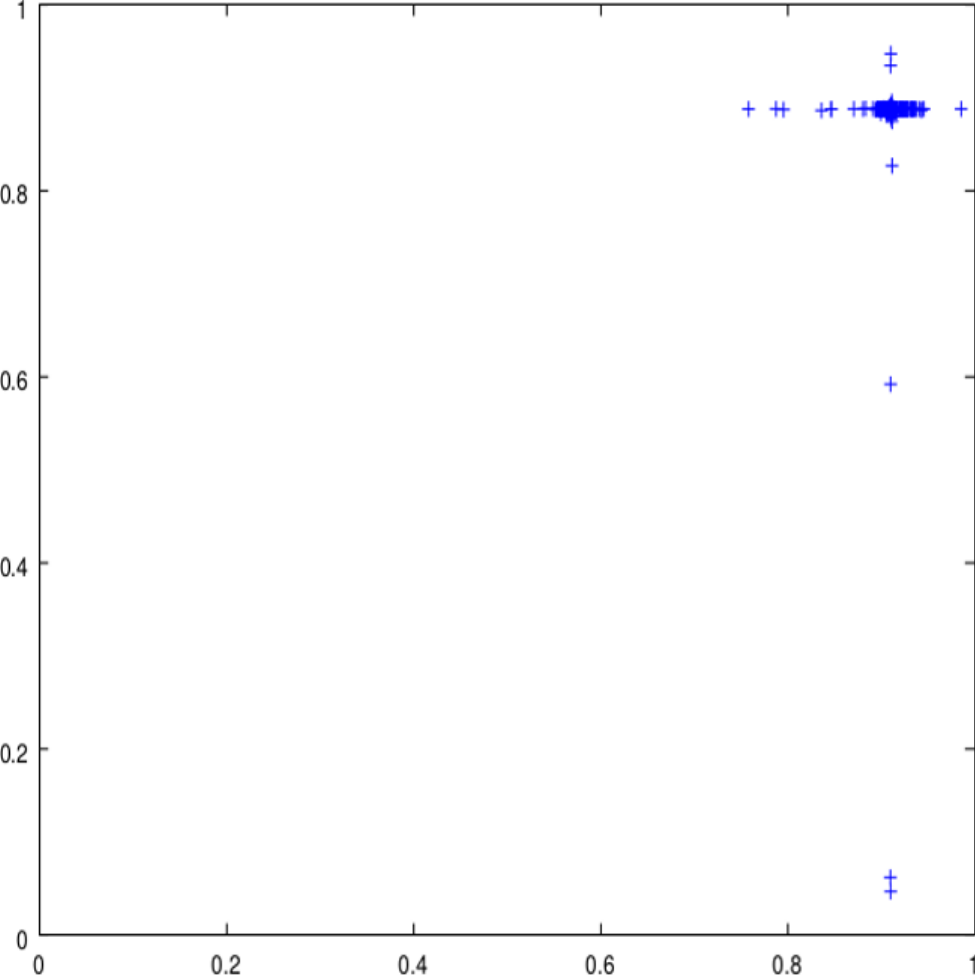} \hspace{0.2cm} \includegraphics[height=3.2cm]{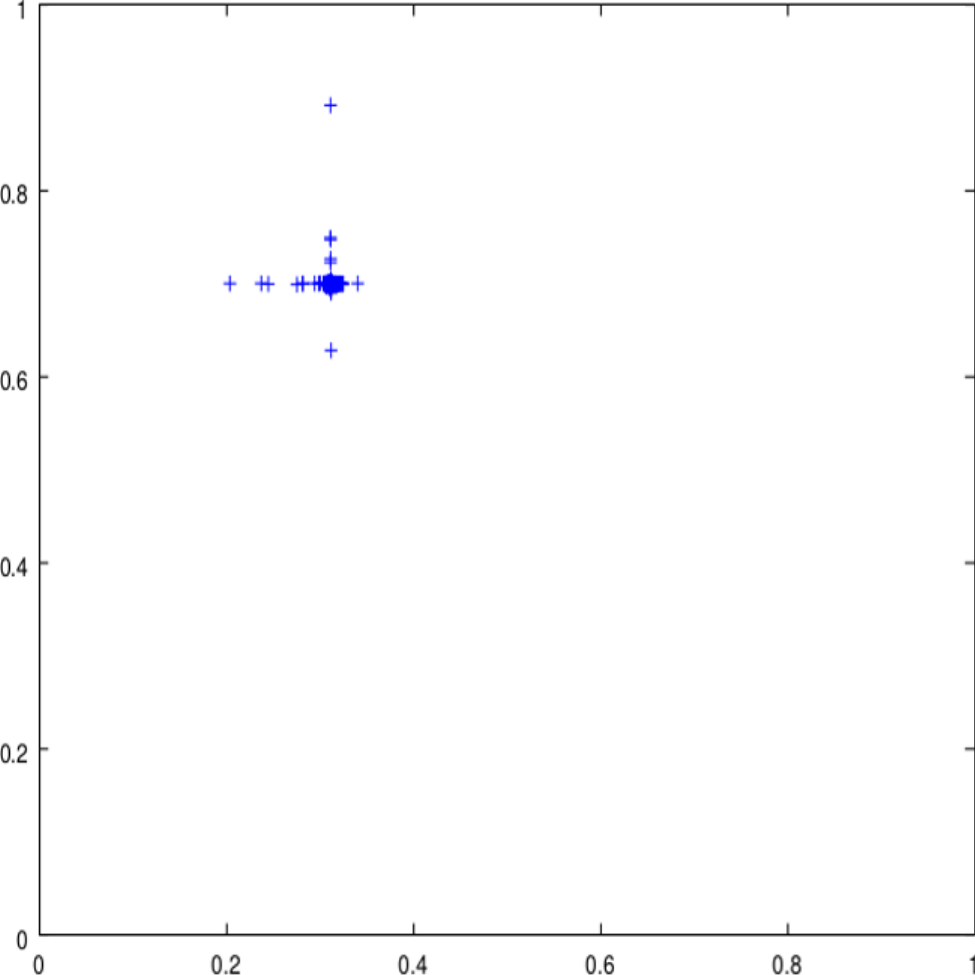} 
\caption{ Iterates of 1000 randomly chosen initial points under random compositions of the  
torus diffeomorphisms $f_0(x,y) = (x + \frac{1}{4\pi} \sin (2\pi x) , y + \frac{1}{4\pi} \sin (2\pi y))$
and $f_1 (x,y) = (x + \frac{1}{\pi} , y + \frac{1}{\sqrt{2}\pi}) \mod 1$,
picked randomly with probability $1/2$ for both. The randomly chosen initial points in the left frame
are iterated $50$, $100$, $150$ iterates in the following frames from left to right.
\label{f:synchonT2}}
\end{center}

\end{figure}
Synchronization phenomena as illustrated in Figures~\ref{f:synchonT} and \ref{f:synchonT2} and as considered in this paper fall into
the larger concept of master-slave synchronization or synchronization by noise.
The classical concept of synchronization is the phenomenon that different oscillations in coupled systems  
will converge to oscillations that move with identical frequency.
It has been realized that external forcing or noise, rather than coupling, can also synchronize dynamics. 
An example of synchronization by noise that applies to iterated function systems  
on the circle is by Antonov  \cite{ant84}.
An illustrative example for linear differential equations forced by the Lorenz equations,  
is given by Pecora and Carroll \cite{pc90}.  
Other examples are from random interval diffeomorphisms \citep{ghahom16},
random logistic maps \cite{ste01}
and stochastic differential equations \citep{masyou02,flagessch17a,flagessch17b}.
The book \cite{synch} 
contains an excellent overview of different aspects of synchronization and includes discussions of synchronization by external forces or noise.

In the context of iterated maps, master-slave synchronization involves dynamics 
\begin{align}\label{e:f}
x(k+1) &= f(y(k),x(k))
\end{align} 
for a state variable 
$x(k) \in M$ and a driving system 
\begin{align}
\label{e:g} 
y(k+1) = g(y(k))
\end{align} 
on a space $N$.
The entire dynamics $(y(k+1),x(k+1))  = F(y(k),x(k))$ with
\begin{align*}
F (y,x) &= (g(y),f(y,x)) 
\end{align*} thus is
a skew product system on $N\times M$ with base space $N$ and fibers $\{y\} \times M$.
Master-slave synchronization is the effect that typical orbits  of \eqref{e:f}
converge to each other under the same driving dynamics, i.e.
identical orbits of \eqref{e:g}:
\begin{align*}
  \lim_{n\to\infty} d (\Pi F^n (y,x_1) , \Pi F^n(y,x_2)) =0,
\end{align*}
where $\Pi(y,x)=x$ and 
 $d$ denotes the distance on $M$.
The effect can be explained by the existence of an attracting invariant graph
for the skew product system \citep{MR1605989,sta99}. 
Iterated function systems fit this description: they are studied
using a formulation as a skew product system over a shift operator (see below).

\subsection{Minimal iterated function systems}

One ingredient of this study is the robust occurrence  of minimal iterated function systems.
Consider a  collection $\mathcal{F} = \{ f_0,\ldots,f_{k} \}$ of diffeomorphisms on a compact manifold $M$.

\begin{definition}
The iterated function system $\text{IFS}\,  (\mathcal{F})$ generated by $\mathcal{F}$ is the 
set of all possible finite compositions $f_{\omega_n} \circ \cdots \circ f_{\omega_0}$ of maps in $\mathcal{F}$, that is, the semi-group generated by these maps.


A $C^1$ neighborhood of $\text{IFS}\,  (\mathcal{F})$  consists of iterated function systems $\text{IFS}\, (\mathcal{G})$, 
where 
$\mathcal{G} = \{ g_0,\ldots,g_k\}$ with $g_i \in U_i$ and $U_i$, $0\le i \le k$, 
is a collection of $C^1$ open neighborhoods $U_i$ of $f_i$.
\end{definition}

%


\begin{definition}
An iterated function system $\text{IFS}\, (\mathcal{F})$, $\mathcal{F} = \{ f_0,\ldots,f_k\}$, on $M$ is called minimal if for every $x \in M$ there exists a sequence of compositions
$f_{\omega_n} \circ \cdots \circ f_{\omega_0}$  so that the sequence 
$f_{\omega_n} \circ \cdots \circ f_{\omega_0} (x)$ is dense in $M$. 

The iterated function system $\text{IFS}\, (\mathcal{F})$ is $C^1$-robustly minimal if there exists a $C^1$ neighborhood $U$
of it,
%
%
%
so that 
each $\text{IFS}\, (\{g_0,\ldots,g_{k}\} )$ from $U$ is minimal.
\end{definition}

By \cite{homnas}, 
any compact manifold admits a pair of diffeomorphisms $f_0,f_1$ that generates a $C^1$-robustly minimal iterated function system.

\subsection{Synchronization in iterated function systems}

Consider finitely many diffeomorphisms $\mathcal{F} = \{ f_0, \ldots, f_{k}  \}$ on a compact manifold $M$ and
fix positive probabilities $p_i$, $0\le i \le k$, with $\sum_{i=0}^k p_i  =1$.
The diffeomorphisms $f_i$ are picked at random, independently at each iterate, with probability $p_i$.
In this context of Markov processes a central notion is that of stationary measure. 

\begin{definition}
A stationary measure $m$ for the iterated function system $\text{IFS}\, (\mathcal{F})$
is a probability measure that is equal to its average pushforward under the diffeomorphisms:
\begin{align}\label{e:defstat}
m &= \sum_{i=0}^{k} p_i (f_i)_* m,
\end{align}
where $(f_i)_* m$ is the pushforward measure $(f_i)_* m (A) = m (f_i^{-1} (A))$. 
\end{definition}

It is well known that $\text{IFS}\, (\mathcal{F})$ always admits at least one stationary measure, 
because $M$ is a compact space.

\begin{lemma}\label{l:fullsupport}
For an iterated function system that is minimal on $M$, a stationary measure $m$ has full support $M$.
\end{lemma}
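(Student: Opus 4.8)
The plan is to show that the support of a stationary measure $m$ is both nonempty and forward-invariant under every generator $f_i$, so that minimality forces it to be all of $M$. First I would recall that $\mathrm{supp}\,m$ is a nonempty closed subset of the compact manifold $M$, since $m$ is a probability measure. The key structural fact is that $\mathrm{supp}\,m$ is invariant under each $f_i$ in the sense that $f_i(\mathrm{supp}\,m) \subseteq \mathrm{supp}\,m$; I would derive this from the stationarity identity $m = \sum_{i=0}^{k-1} p_i f_i m$ together with positivity of each $p_i$. Concretely, if $U$ is open with $m(U) = 0$, then $0 = \sum_i p_i m(f_i^{-1}(U))$ and hence $m(f_i^{-1}(U)) = 0$ for every $i$; taking $U$ to range over a countable basis of the complement of $\mathrm{supp}\,m$ gives $m(f_i^{-1}(M \setminus \mathrm{supp}\,m)) = 0$, i.e. $f_i^{-1}(M \setminus \mathrm{supp}\,m)$ misses $\mathrm{supp}\,m$ up to an $m$-null set, which after using that $\mathrm{supp}\,m$ is the smallest closed full-measure set yields $f_i^{-1}(M\setminus \mathrm{supp}\,m) \cap \mathrm{supp}\,m = \emptyset$, equivalently $f_i(\mathrm{supp}\,m)\subseteq \mathrm{supp}\,m$.

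Next I would invoke minimality: fix any point $p \in \mathrm{supp}\,m$. By definition of minimality for $\mathrm{IFS}(\mathcal{F})$, the orbit $p$ has a branch $\{x_n\}$ that is dense in $M$, and each $x_n$ is obtained from $p$ by applying a composition of the generators $f_i$. By the invariance established above, iterating $f_i(\mathrm{supp}\,m)\subseteq \mathrm{supp}\,m$ shows every such composition maps $\mathrm{supp}\,m$ into itself, so $x_n \in \mathrm{supp}\,m$ for all $n$. Since $\mathrm{supp}\,m$ is closed and contains a dense subset of $M$, we conclude $\mathrm{supp}\,m = M$.

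The main obstacle — really the only delicate point — is the passage from "$m(f_i^{-1}(U))=0$ for all basic open $U$ disjoint from $\mathrm{supp}\,m$" to the clean containment $f_i(\mathrm{supp}\,m)\subseteq \mathrm{supp}\,m$, since one must be careful that $f_i$ is a homeomorphism (here a diffeomorphism), so that $f_i^{-1}$ of an open set is open and the set-theoretic manipulations are valid, and that $\mathrm{supp}\,m$ is characterized as the complement of the union of all $m$-null open sets. This is routine measure theory given that the $f_i$ are diffeomorphisms of the compact manifold, so no real difficulty arises; the argument is short once the invariance of the support is in hand.
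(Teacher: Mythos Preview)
Your proof is correct and follows essentially the same route as the paper's: the paper's three-line argument notes that the support $A$ of $m$ is closed, nonempty, and satisfies $A = \bigcup_i f_i(A)$ from stationarity, whence $A = M$ by minimality. You spell out in more detail the inclusion $f_i(\mathrm{supp}\,m) \subseteq \mathrm{supp}\,m$ (which is all that is actually needed) and the concluding minimality step, but the logical structure is the same.
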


\begin{proof}
See \cite[Proposition~5]{1086.37026}.
The support $\text{supp}\,m$ of $m$ is closed and nonempty. 
From \eqref{e:defstat} we get \[\text{supp}\,m = \cup_{i=0}^{k} f_i (\text{supp}\,m),\]
so that $\text{supp}\,m = M$ by minimality.
\end{proof}

For the given probabilities $p_i$ on $\{ 0, \ldots, k\}$,  
let $\nu^+$ be the product measure (or Bernoulli measure) on \[\Sigma^+_{k+1} = \{ 0,\ldots,k\}^\mathbb{N}.\] 
For $\omega \in \Sigma_{k+1}^+$ we write
\[f_\omega^n (x) = f_{\omega_n} \circ \cdots \circ  f_{\omega_1} \circ f_{\omega_0} (x).\]
The following result provides conditions for an iterated functions system by diffeomorphisms to display synchronization.
We first recall the notion of fiber Lyapunov exponent.
With the stationary measure $m$ as in the statement of Theorem~\ref{t:synch}, and in light of Lemma~\ref{l:corres} below, 
one has that for $(\nu^+ \times m)$-almost all $(\omega,x) \in \Sigma^+_{k+1} \times M$, and $0 \ne v \in T_x M$,  
\[
 \lim_{n\to \infty}  \frac{1}{n} \ln \|  Df^n_\omega (x) v  \|
\]
exists. 
The number of limit values, counting multiplicity, equals the dimension of $M$.
The possible limit values are the fiber Lyapunov exponents. 
If $\nu^+ \times m$ is ergodic (the stationary measure $m$ is then also called ergodic),
 the fiber Lyapunov exponents are independent of $(\omega,x)$.
We refer to e.g. \cite{via14} for more information.

\begin{theorem}\label{t:synch}
	Let $\text{IFS}\, (\mathcal{F})$, $\mathcal{F} =  \{ f_0,\ldots,f_k\}$ be an iterated function system of $C^2$ diffeomorphisms
	on $M$, where $f_i$ is picked with probability $p_i > 0$.
		There is $k \ge 1$ and 
	a $C^1$ open set of iterated function systems $\text{IFS}\, (\mathcal{F})$, $\mathcal{F}= \{ f_0,\ldots,f_k\}$,
	generated by $C^2$ diffeomorphisms $f_i$, $0\le i\le k$,  on $M$
	and picked with probabilities $p_i >0$,
	with the following properties.
		\begin{enumerate}[label=(\roman*)]
		\item\label{i:i}  $\text{IFS}\, (\mathcal{F})$ is minimal; 
		\item\label{i:ii} 
		$\text{IFS}\, (\mathcal{F})$ admits an ergodic stationary measure $m$ (which is of full support by Lemma~\ref{l:fullsupport});
			\item\label{i:ciii} the diffeomorphism $f_0$ has an attracting fixed point $Q_0$ with $m (W^s (Q_0) ) > 1/2$;
				\item \label{i:iii} $\text{IFS}\, (\mathcal{F})$  has only negative fiber Lyapunov exponents.
		\end{enumerate}
The following properties hold for any iterated function system from this open set.		
	\begin{enumerate}
	\item\label{i:iv}
	For $\nu^+$-almost all $\omega \in \Sigma^+_{k+1}$ there is an open, dense set $W(\omega)\subset M$  so that 
	\begin{align*} 
	\lim_{n\to \infty} d( f^n_{\omega} (x) , f^n_{\omega} (y)) = 0
	\end{align*}
	for  $x,y \in W(\omega)$;
	\item  
	The stationary measure $m$ is the unique stationary measure.
\end{enumerate}
\end{theorem}

To obtain the synchronization property (\ref{i:iv}) we make use of the theory of nonuniform hyperbolicity;
for this reason we require $f_0, \ldots, f_k$ to be $C^2$ diffeomorphisms and not just $C^1$.

For diffeomorphisms on the circle Theorem~\ref{t:synch} is known to hold for iterated function systems with 
two generators, i.e. with $k=1$ (see \cite{1086.37026}). We will show in Proposition~\ref{p:synchM2} that
also on compact surfaces it holds with $k=1$.

The arguments that have been applied to obtain synchronization for iterated function systems on the circle 
used specific properties of one dimensional systems, such as the property that connected sets of 
small measure have small diameter. This is not true in higher dimensions, so that a partly different approach was needed.
We use the theory of nonuniformly hyperbolic systems, in particular the existence of local stable manifolds
in cases of negative fiber Lyapunov exponents. 
There are three main conditions that we use to establish synchronization: 
a minimal iterated function system, negative fiber Lyapunov exponents yielding local contraction, 
and global contraction in the form of the existence of an open set of sufficiently large (stationary) measure
that is mapped into an arbitrary small ball by suitable iterates.

\subsection{Invariant measures for skew product systems}

Compositions of diffeomorphisms $f_0,\ldots,f_k$ on $M$ can be studied in a single framework given by a skew product system
$F^+:  \Sigma_{k+1}^+ \times M \to \Sigma_{k+1}^+ \times M$, 
\begin{align*}
 F^+ (  (\omega_i)_0^\infty , x) = ((\omega_{i+1})_0^\infty  , f_{\omega_0} (x)). 
\end{align*}
Indeed, the coordinate in $M$ iterates as
\[
x , f_{\omega_0} (x) , f_{\omega_1} \circ f_{\omega_0} (x) , f_{\omega_2} \circ f_{\omega_1} \circ f_{\omega_0} (x), \ldots
\]
We will write
\begin{align*}
F^+ (\omega , x ) &= (\sigma \omega , f_{\omega} (x) )
\end{align*}
with the shift operator $(\sigma \omega)_i = \omega_{i+1}$ on $\Sigma^+_{k+1}$.
As the dependence of the fiber maps $f_\omega$ on $\omega$ is on $\omega_0$ alone, 
the skew product maps are of a restricted kind called step skew product maps.

An explicit computation on sets that generate the Borel sigma-algebra shows the following connection, which is standard 
(see e.g. \cite{ghahom16}).

\begin{lemma}\label{l:corres}
A probability measure $m$ is a stationary measure if and only if
  $\mu^+ = \nu^+ \times m$
is an invariant measure of $F^+$ with marginal $\nu^+$ on $\Sigma^+_{k+1}$. 
\end{lemma}

We call $m$ an ergodic stationary measure if $\nu^+ \times m$ is ergodic for $F^+$.
The natural extension of $F^+$ is obtained when the shift acts on two sided time $\mathbb{Z}$;
this yields a skew product system $F : \Sigma_{k+1} \times M\to \Sigma_{k+1}\times M$
with 
\[
\Sigma_{k+1} = \{0,\ldots,k\}^\mathbb{Z}
\] 
and given by the same expression
\begin{align*}
F (\omega , x ) &= (\sigma \omega , f_{\omega} (x) ).
\end{align*}
Recall the notation
\[F^n(\omega , x) = (\sigma^n \omega , f^n_\omega(x) ) = 
(\sigma^n \omega , f_{\omega_{n-1}} \circ \cdots \circ f_{\omega_0} (x))\] for iterates of $F$.

Invariant measures for $F^+$ with marginal $\nu^+$ and invariant measures for $F$ with marginal $\nu$
are in one to one relationship.
We quote the following result that precises this correspondence. 
  Write $\mathcal{M}$ for the space of probability measures on $M$ endowed with the weak star topology.
  
 \begin{proposition}\label{p:furstenberg}
 Let $m$ be a stationary measure for $\text{IFS}\, (\mathcal{F})$.
 Then there exists a measurable map $\mathcal{L} : \Sigma_{k+1}  \to \mathcal{M}$,
 such that
 $$
  f^n_{\sigma^{-n}\omega}   m
         \to {\mathcal L}({\omega})
 $$
 as $n\to \infty$, $\nu$-almost surely.
 The measure $\mu$ on $\Sigma_{k+1} \times M$ 
 with marginal $\nu$ and conditional measures
 $\mu_\omega = \mathcal{L}({\omega})$ is an $F$-invariant measure.
 \end{proposition}
 
 \begin{proof}
 See 
 \cite[Theorem 1.7.2]{arn98} for a general result that implies the proposition, 
 or \cite[Appendix~A]{ghahom16} or \cite[Chapter~5]{via14} for this setting.
 \end{proof}

A stationary measure $m$ thus, through the invariant measure $\nu^+ \times m$ for $F^+$, gives rise to an invariant measure
$\mu$ for $F$, with marginal $\nu$.
The measure $\mu$ has conditional measures $\mu_\omega$, meaning
\begin{align*}
\mu (A) &= \int_{\Sigma_{k+1}} \mu_\omega (A \cap (\{\omega\} \times M))\, d \nu (\omega)
\end{align*}
for Borel sets $A$.

\begin{proposition}\label{p:crauel}
Let $m$ be an ergodic stationary measure for $\text{IFS}\, (\mathcal{F})$.
 Assume $F^+$  has negative fiber Lyapunov exponents with respect to the
 ergodic measure $\nu^+ \times m$.
 Then the conditional measures $\mu_\omega$ for the $F$-invariant measure $\mu$ 
 are a finite sum of $K$ delta measures of equal mass $1/K$, for $\nu$-almost all $\omega$.
 \end{proposition}
 
 \begin{proof}
 See \citep{cra90,lej87}. 
 \end{proof}
 

Recent work by Bochi, Bonatti and D\'{\i}az \cite{bbd} establishes for each compact manifold $M$ 
a $C^2$ open set of iterated function systems,
generated by finitely many diffeomorphisms on $M$, for which the corresponding skew product system on $\Sigma_{k+1} \times M$
admits an invariant measure of full support for which all fiber Lyapunov exponents are zero. 
The following result phrases Theorem~\ref{t:synch} in similar terms.

\begin{proposition}\label{p:measure}
Let $M$ be a compact manifold.
There is $k \ge 1$ and a $C^1$ open set of minimal iterated function systems
generated by $C^2$ diffeomorphisms $f_0$, $0 \le i \le k$, on $M$
and picked with positive probabilities $p_i$, 
with the following properties.

The iterated function system admits a unique stationary measure  $m$. 
The corresponding invariant measure $\mu$ for the skew product system $F$ on $\Sigma_{k+1} \times M$
has the following properties:
\begin{enumerate}[label=(\roman*)]
 \item $F$  has only negative fiber Lyapunov exponents with respect to $\mu$;
 \item $\mu$ has full support;
 \item the marginal of $\mu$ on $\Sigma_{k+1}$ is the Bernoulli measure $\nu$;
 \item the conditional measures $\mu_\omega$ are delta measures: $\mu_\omega = \delta_{X(\omega)}$ for a measurable map
 $X: \Sigma_{k+1} \to M$.
 \end{enumerate}
\end{proposition} 
 
The open class of iterated function systems in \cite{bbd} is given in terms of conditions
which they term minimality (of an induced iterated function system on a flag bundle) and maneuverability.
The construction in \cite{bbd} makes clear that these conditions can occur simultaneously with 
the conditions defining the set of iterated function systems in Proposition~\ref{p:measure}.
So Proposition~\ref{p:measure} combined with \cite{bbd} yields the following result.
 
\begin{proposition}\label{p:measure2}
Let $M$ be a compact manifold.
There is $k \ge 1$ and a $C^2$ open set of minimal iterated function systems
generated by diffeomorphisms $f_i$, $0\le i \le k$, on $M$
and picked with positive probabilities $p_i$,
with the following properties.

The corresponding skew product system $F$ on $\Sigma_{k+1} \times M$ admits simultaneously 
\begin{enumerate}[label=(\roman*)]
\item an invariant measure $\mu$ that has full support, Bernoulli measure as marginal, delta measures as conditional measures on fibers, and negative fiber Lyapunov exponents;
\item an invariant measure $\nu$ that has full support and zero fiber Lyapunov exponents.
\end{enumerate}
\end{proposition} 


The techniques in this paper may possibly be extended from the step skew product systems
given by iterated function systems to more general skew product systems. An example is in \cite{hom}, 
where the ideas of synchronization have been used to clarify properties of the disintegrations of Lebesgue measure along
center manifolds in certain conservative partially hyperbolic dynamical systems.\\

\noindent {\bf Acknowledgments.}
I am grateful to Masoumeh Gharaei for many discussions on the paper.

\section{Proofs}

The proofs of Theorem~\ref{t:synch} and of Proposition~\ref{p:measure} contain  different steps presented as lemmas that are grouped
in sections. The sections 2.i below treat the following steps:
\begin{description}
 \item[{\rm 2.1}] the existence of minimal iterated function systems with negative fiber Lyapunov exponents 
 (proving items~\ref{i:i}, \ref{i:iii} in Theorem~\ref{t:synch});
  \item[{\rm 2.2}] properties of the stationary measure that are needed in the proof;
 \item[{\rm 2.3}] the existence of atomic conditional measures of the $F$-invariant measure $\mu$ along fibers 
 (finishing the proof of Proposition~\ref{p:measure});
 \item[{\rm 2.4}] the occurrence of synchronization 
 (item~\ref{i:iv} in Theorem~\ref{t:synch});
 \item[{\rm 2.5}] the uniqueness of the stationary measure (item~\ref{i:ii} in Theorem~\ref{t:synch}, finishing the proof of Theorem~\ref{t:synch}).
 \end{description}

\subsection{Minimality and negative fiber Lyapunov exponents}\label{ss:miniplusneg}

This section constructs a $C^1$ open set of  minimal iterated function systems with negative fiber Lyapunov exponents.
We start with a number of results on iterated function systems generated by two maps.
We collect statements from \cite{homnas} 
that we need in the sequel.

Recall that a Morse-Smale diffeomorphism on $M$ is a diffeomorphism whose recurrent set consists of finitely many
fixed and periodic points, all hyperbolic. Moreover, their stable and unstable manifolds are transverse.

\begin{lemma}\label{l:homnasuitgewerkt}
Let $M$ be a compact manifold. Then there is a pair of diffeomorphisms $\hat{g}_0,\hat{g}_1$ on $M$ that generates a $C^1$-robustly minimal iterated function system. The diffeomorphisms satisfy the following properties.
The diffeomorphism $\hat{g}_0$ is a Morse-Smale diffeomorphism with a unique attracting fixed point $Q_0$, 
whose basin of attraction is open and dense in $M$.
There is a small neighborhood $U_0$ of $Q_0$ and a compact ball $B_0\subset U_0$ with the following properties:
\begin{enumerate}[label=(\roman*)]
\item $\hat{g}_1$ has a repelling fixed point in $B_0$;
\item  $\hat{g}_0$ and $\hat{g}_0 \circ \hat{g}_1$ are contractions on $U_0$, mapping $U_0$ into $U_0$;
\item $B_0 \subset \hat{g}_0 (B_0) \cup \hat{g}_0\circ \hat{g}_1 (B_0)$;
\end{enumerate}
\end{lemma}

Denote $h_0 = \hat{g}_0, h_1 = \hat{g}_0\circ \hat{g}_1$.
By classical theory of iterated function systems \cite{hut81},
there is a unique compact set $S$ with $B_0 \subset S \subset U_0$ and 
\begin{align}\label{e:Sinv} 
S &= h_0 (S) \cup h_1 (S).
\end{align}
Further,
\begin{align}\label{e:diam0}
 \text{diam}\, h^n_\omega (S) &\to 0
\end{align} 
for all $\omega \in \Sigma_2^+$ and 
uniformly in $n$.
By \eqref{e:Sinv}, for each $n \in \mathbb{N}$, 
\begin{align*}
S &= \cup_{\omega \in \Sigma_2^+} h^n_\omega (S).
\end{align*} 
With \eqref{e:diam0} this implies that $\text{IFS}\, (\{\hat{g}_0,\hat{g}_0\circ \hat{g}_1\})$ is minimal on $S$.

To obtain minimal iterated function systems with negative fiber Lyapunov exponents, we start
with an iterated function system as in Lemma~\ref{l:homnasuitgewerkt} and modify to bring
strong contraction on a region. This is elaborated in the proof of Lemma~\ref{l:existence} below.
In the proof we need a statement on the dependence of stationary measures on the iterated function system, which we provide first.
Recall that $\mathcal{M}$ denotes the space of probability measures on $M$ endowed with the weak star topology.
Take a metric $d_\mathcal{M}$ that generates the weak star topology, see e.g.
\cite{MR889254}.
Denote by $\mathcal{P} : \mathcal{M} \to \mathcal{M}$ the map
\begin{align*}
 \mathcal{P} m &= \sum_{i=0}^{k} p_i (f_i)_* m.
\end{align*}
Note that stationary measures  are fixed points of $\mathcal{P}$.

\begin{lemma}\label{l:c0}
The map $\mathcal{P}$ is continuous.
It also depends continuously on $f_0,\ldots,f_{k}$ if these vary in the $C^0$ topology.
\end{lemma}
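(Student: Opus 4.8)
The plan is to verify the two assertions — continuity of $\mathcal{P}$ on $\mathcal{M}$, and continuity of $\mathcal{P}$ in the generating maps in the $C^0$ topology — by unwinding the definition of weak star convergence and using that each $f_i$ is a continuous (indeed $C^1$) map of a compact manifold. For the first assertion I would fix a test function $\varphi \in C^0(M)$ and note that, by the change-of-variables formula for push-forwards, $\int_M \varphi\, d(f_i m) = \int_M \varphi\circ f_i\, dm$. Since $\varphi\circ f_i \in C^0(M)$, if $m_n \to m$ weak star then $\int_M \varphi\circ f_i\, dm_n \to \int_M \varphi\circ f_i\, dm$ for each $i$; summing the finitely many terms with weights $p_i$ gives $\int_M \varphi\, d(\mathcal{P} m_n) \to \int_M \varphi\, d(\mathcal{P} m)$. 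As $\varphi$ was arbitrary, $\mathcal{P} m_n \to \mathcal{P} m$ weak star, which is the desired continuity (recall $\mathcal{M}$ is metrizable, so sequential continuity suffices).

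For the second assertion I would again fix $\varphi \in C^0(M)$ and compare $\mathcal{P}$ built from $(f_0,\dots,f_{k-1})$ with $\widetilde{\mathcal{P}}$ built from $C^0$-nearby maps $(\tilde f_0,\dots,\tilde f_{k-1})$. The difference of integrals is $\sum_{i} p_i \int_M (\varphi\circ f_i - \varphi\circ \tilde f_i)\, dm$, and since $M$ is compact, $\varphi$ is uniformly continuous; hence $\|\varphi\circ f_i - \varphi\circ \tilde f_i\|_{\infty}$ is small once $\sup_{x\in M} d(f_i(x),\tilde f_i(x))$ is small, i.e. once the $\tilde f_i$ are $C^0$-close to the $f_i$. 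This bound is uniform over all probability measures $m$, so $\widetilde{\mathcal{P}} \to \mathcal{P}$ uniformly on $\mathcal{M}$ in the $C^0$ topology on the generators; in particular the joint dependence $(m; f_0,\dots,f_{k-1}) \mapsto \mathcal{P} m$ is continuous, which is all that is used in the proof of Theorem~\ref{t:synch} (for the upper semi-continuity of the set of fixed points).

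There is essentially no obstacle here; the only mild point to be careful about is that weak star convergence must be tested against $C^0$ functions and that push-forward interacts with integration via precomposition, so the relevant modulus of continuity is that of $\varphi$, not of the $f_i$ — and uniform continuity of $\varphi$ on the compact manifold $M$ takes care of everything. One may remark that the same argument shows $\mathcal{P}$ depends continuously on the generators in any $C^r$ topology, $r \ge 0$, since $C^0$-closeness is the weakest such requirement.
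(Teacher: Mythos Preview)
Your proof is correct and complete; it simply takes a different (and arguably more direct) route than the paper. You work with the defining property of weak star convergence, testing against continuous functions $\varphi$ and using the change-of-variables identity $\int \varphi\, d(f_i m) = \int (\varphi\circ f_i)\, dm$; continuity in $m$ then follows because $\varphi\circ f_i$ is itself a valid test function, and continuity in the generators follows from uniform continuity of $\varphi$ on the compact manifold. The paper instead invokes the Portmanteau characterization, testing against open sets $A$ via the criterion $\liminf_n \mu_n(A) \ge \mu(A)$: for continuity in $m$ it uses that $T^{-1}(A)$ is open when $T$ is continuous, and for continuity in the generators it argues that $\mu\bigl(\bigcap_{n\ge N} T_n^{-1}(A)\bigr) \nearrow$ something at least $\mu(T^{-1}(A))$ because every point of $T^{-1}(A)$ eventually lies in $T_n^{-1}(A)$. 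Your approach avoids the detour through Portmanteau and, as you note, yields the bonus that the convergence $\widetilde{\mathcal{P}} \to \mathcal{P}$ is uniform over all $m \in \mathcal{M}$, which the paper's open-set argument does not immediately give (the paper's argument is stated for a fixed $\mu$). Either route suffices for the application to upper semi-continuity of the fixed-point set.
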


For the proof we refer to \cite{ghahom16}.

\begin{lemma}\label{l:existence}
There exists a  $C^1$ open set of iterated function systems $\text{IFS}\, (\{ g_0,g_1\})$ 
so that $\text{IFS}\, (\{ g_0,g_1\})$ in minimal on $M$ and, for each stationary measure $m$,
has negative fiber Lyapunov exponents. 
\end{lemma}

\begin{proof}
Start with 
$\hat{g}_0,\hat{g}_1$ satisfying the properties listed in Lemma~\ref{l:homnasuitgewerkt}. 
Let a smooth map $\tilde{g}_0$ and open balls $D_0 \subset C_0 \subset B_0\subset U_0$ be so that
\begin{enumerate}[label=(\roman*)]
\item $\tilde{g}_0 = \hat{g}_0$ on $B_0 \setminus C_0$;
\item  $D\tilde{g}_0 = 0$ on $D_0$;
\item $\tilde{g}_0$ and $\tilde{g}_0 \circ \tilde{g}_1$ are contractions on $U_0$, mapping $U_0$ into $U_0$;
\item $B_0 \subset \tilde{g}_0 (B_0) \cup \tilde{g}_0\circ \tilde{g}_1 (B_0)$;
\end{enumerate}
Because of the vanishing derivative on $D_0$, $\tilde{g}_0$ is not a diffeomorphism.
Such maps can be constructed by modifying the constructions in \cite{homnas} as follows.
By working in a chart containing $U_0$ we may assume $\hat{g}_0,\hat{g}_1$ are diffeomorphisms on Euclidean space $\mathbb{R}^n$,
with $B_0$ containing the origin.
Let $\phi: \mathbb{R} \to \mathbb{R}$ be a smooth odd function with $\phi \equiv 0$ on a small neighborhood of $0$, and otherwise increasing  plus equal to the identity outside a neighborhood of $0$. For $s>0$ small, let $r : \mathbb{R}^n \to \mathbb{R}^n$ be given by $ r(x) = s \phi (\| x/s\|)$
and let $\tilde{g}_0 = \hat{g}_0 \circ r$.

Although  $\tilde{g}_0$ is not a diffeomorphism,
there are 
diffeomorphisms in any $C^1$-neighborhood of it:
to perturb to a diffeomorphism it suffices to perturb $\phi$ to a smooth increasing function.
Keep $\hat{g}_1$ unchanged and write $\tilde{g}_1 = \hat{g}_1$.

%

For $s$ small, the properties listed in Lemma~\ref{l:homnasuitgewerkt} remain true for $\text{IFS}\, (\{\tilde{g}_0,\tilde{g}_1\})$. 
By identical arguments: there is a unique compact set $\tilde{S}$ with $B_0\subset \tilde{S} \subset U_0$ and
\[
\tilde{S} = \tilde{g}_0 (\tilde{S}) \cup \tilde{g}_0 \circ \tilde{g}_1 (\tilde{S}).
\]
Moreover,  $\text{IFS}\, (\{ \tilde{g}_0,\tilde{g}_0\circ \tilde{g}_1 \})$ is minimal on $\tilde{S}$.
Since $\tilde{g}_0, \tilde{g}_1$ are equal to $\hat{g}_0,\hat{g}_1$ outside $U_0$,
$\text{IFS}\,( \{\tilde{g}_0,\tilde{g}_1\})$ is minimal on $M$, compare \cite{homnas}.

By Lemma~\ref{l:c0}, the set of fixed points  of $\mathcal{P}$ is a compact subset of $\mathcal{M}$  that varies 
upper semi-continuously when varying $\tilde{g}_0,\tilde{g}_1$ in the $C^1$ topology.
That is, for any neighborhood $O\subset \mathcal{M}$ of the closed set of stationary measures of  $\text{IFS}\,(\{\tilde{g}_0,\tilde{g}_1\})$,
there is a $C^1$ neighborhood of $\text{IFS}\,(\{\tilde{g}_0,\tilde{g}_1\})$ so that each iterated function system 
from it has its stationary measures contained in $O$.

We claim the existence of an open neighborhood of $\tilde{g}_0,\tilde{g}_1$ in the $C^1$ topology, so that for all pairs of 
diffeomorphisms $g_0,g_1$ from it,  and for any ergodic stationary measure $m$ of  $\text{IFS}\, \{ g_0,g_1\}$,
$\text{IFS}\, \{ g_0,g_1\}$  has negative fiber Lyapunov exponents.

Suppose otherwise. Then there is a sequence $\tilde{g}_{0,j}$ converging to $\tilde{g}_0$ 
and  $\tilde{g}_{1,j}$ converging to $\tilde{g}_1$
in the $C^1$ topology, with a nonnegative fiber  Lyapunov exponent for some ergodic stationary measure $\tilde{m}_j$. 
By passing to a subsequence we may assume that $\tilde{m}_j$ converges to a measure $\tilde{m}$.
Lemma~\ref{l:c0} shows that $\tilde{m}$ is a stationary measure of $\text{IFS}\,(\{ \tilde{g}_0,\tilde{g}_1 \} )$.
By Lemma~\ref{l:fullsupport}, $\tilde{m}$ has full support. (For these statements it plays no role that $\tilde{g}_0$ is not a diffeomorphism.)

The top fiber  Lyapunov exponent $\lambda_{1,j}$ of $\text{IFS}\,(\{ \tilde{g}_{0,j},\tilde{g}_{1,j}\})$
satisfies, for $(\nu^+\times \tilde{m}_j)$-almost all $(\omega,x)$,
 \begin{align*} \lambda_{1,j} &\le 
  \lim_{n\to \infty} \frac{1}{n}  \sum_{i=0}^{n-1}  \ln \|  D\tilde{g}_{ \sigma^i \omega,j} ( \tilde{g}_{\omega,j}^i (x) )   \| \\ &=  
  \int_M \int_{\Sigma^+_{2}}   \ln \|  D\tilde{g}_{\omega,j} \|\, d \nu^+ (\omega)\, d \tilde{m}_j
  \\ &=   \int_M p_0 \ln \|  D\tilde{g}_{0,j} \|  + p_1 \ln \|  D\tilde{g}_{1,j} \| \, d \tilde{m}_j.
  \end{align*} 
  We can bound
  $\ln \|D\tilde{g}_{0,j}\| \le C$ on $M$ uniformly in $j$.
  There is also a bound
  $\ln \| D \tilde{g}_{0,j} \| \le B_j $ on $C_0$ with $B_j \to -\infty$ as $j\to \infty$.
So we get   
\begin{align}\label{e:CBj}
\int_{M}  \ln \| D\tilde{g}_{0,j} \|\, d \tilde{m}_j &\le C +  B_j \tilde{m}_j(C_0).
\end{align}
As $\tilde{m}_j \to \tilde{m}$ for $j \to \infty$, 
\begin{align*}
\liminf_{j \to \infty} \tilde{m}_j (C_0) &\ge \tilde{m} (C_0) > 0,
\end{align*}
see e.g. \cite[Theorem~III.1.1]{shi84}.
As further  $B_j \to -\infty$ as $j \to \infty$, we conclude from \eqref{e:CBj} that 
\[\int_{C_0}   \ln \| D\tilde{g}_{0,j} \|\, d \tilde{m}_j \to -\infty\] as $j \to \infty$.
Since also  $\int_M \ln \|  D\tilde{g}_{1,j} \|\, d \tilde{m}_j$ is bounded uniformly in $j$,
it  follows that
\[ \lim_{j\to \infty} \lambda_{1,j} = - \infty.\]
This contradiction proves the lemma.
\end{proof}

\subsection{Regularity of stationary measures}

Let $g_0,g_1$ be Morse-Smale diffeomorphisms as in Lemma~\ref{l:existence}.
Recall that the attracting fixed point $Q_0$ of $g_0$ has a basin of attraction 
$W^s(Q_0)$ that is open and dense in $M$.


The reasoning in the following sections would work for $\text{IFS}\, (\{f_0,f_1\})$ with $f_0 = g_0$ and $f_1=g_1$ (i.e. $k=1$ in Theorem~\ref{t:synch} and Proposition~\ref{p:measure}) 
if the basin $W^s (Q_0)$ has sufficiently large stationary measure;
$m (W^s (Q_0) ) > 1/2$. As we do not know whether this is the case, we will add sufficiently many  diffeomorphisms, all  
small perturbations of $g_0$, as generators of an iterated function system and we bound the stationary measures of the
basins of the  attracting points of these extra generators.  It turns out that at least one of these basins 
has stationary measure more than $1/2$, which will suffice for the reasoning in the following sections.   

The complement $\Lambda_0 = M \setminus W^s (Q_0)$ 
is a stratification consisting of the stable manifolds of finitely many hyperbolic fixed or periodic points.

\begin{definition}
A stratification is a compact set consisting of finitely many manifolds $W_i$
with 
\begin{enumerate}[label=(\roman*)]
\item $W_0$ is closed;
\item $\mathrm{dim}\; W_{i+1} \ge \mathrm{dim}\; W_i$;
\item $\overline{W_{i+1}} \backslash W_{i+1} \subset W_0 \cup \ldots \cup W_i$;
\item if $x_n \in W_j$ converges to $y \in W_i$, then there is a sequence of $d$-planes 
$E_n \subset T_{x_n} W_j$
of dimension $d = \mathrm{dim}\; W_i$ that converge to $T_y W_i$. 
\end{enumerate}
\end{definition}

\begin{definition}
Two stratifications $N_1,N_2$ inside $M$ are transverse if at intersection points
the tangent spaces span the tangent space of $M$.

A collection of stratifications $N_1,\ldots,N_l$  is said to be transverse at a common intersection point  
if any $N_k$ is transverse to any intersection $\cap_j  N_{i_j}$ of a subcollection not containing $N_k$.
A collection of stratifications $N_1,\ldots,N_l$ is transverse if any subcollection is transverse at a
common intersection points of the subcollection.
\end{definition}

Starting point for the following is a robustly minimal iterated function system 
$\text{IFS}\, (\{ g_0,\ldots, g_0, g_1\})$, with $k$ copies of $g_0$. Given are probabilities $p_0, \ldots, p_k$
to pick the diffeomorphisms from.  We will assume that   $\text{IFS}\, (\{ g_0,\ldots, g_0, g_1\})$
has robustly negative fiber Lyapunov exponents. 
It follows from the discussion in Section~\ref{ss:miniplusneg} that for given positive probabilities $p_i$, such an iterated function system exists.

\begin{lemma}\label{l:genplusstrat}
Let $k$ be a positive integer.
In any neighborhood of $\text{IFS}\, (\{ g_0,\ldots, g_0, g_1\})$ with $k$ copies of $g_0$, there  
is an open set of iterated function systems
 so that for each
$\text{IFS}\, (\mathcal{F})$, $\mathcal {F} = \{ f_0,\ldots,f_{k} \}$ from this open set,
\begin{enumerate}[label=(\roman*)]
\item $\text{IFS}\, (\mathcal{F})$ is minimal;
\item $\text{IFS}\, (\mathcal{F})$  has negative fiber Lyapunov exponents for each stationary measure;
\item \label{i:qi} the diffeomorphism $f_i$, $0 \le i \le k-1$, has a unique attracting fixed point $Q_i$ 
with open and dense basin $W^s (Q_i)$;
\item \label{i:li} with $\Lambda_i = M \setminus W^s (Q_i)$, the collection
$\{ \Lambda_i\}$, $0\le i \le k-1$,  is a transverse collection of stratifications. 
\end{enumerate}
\end{lemma}

\begin{proof}
%
We already have the first two items. We must check the remaining items~\ref{i:qi} and \ref{i:li}.
Item~\ref{i:qi} is  fulfilled for any $f_i$ sufficiently close to $g_0$ since $g_0$ is a Morse-Smale diffeomorphism.
So it remains to find an open set of diffeomorphisms for which item~\ref{i:li} holds.
We refer to \cite{Hi}, see in particular \cite[Exercise~3.15]{Hi},
for the transversality theorem for stratifications.
It implies that for  a $C^1$ open and dense set of $k$ diffeomorphisms $f_i$, $0\le i \le k-1$, 
the collection of stratifications $\Lambda_i$ is transverse.
\end{proof}

The following lemma bounds the stationary measure on the stratifications. 

\begin{lemma}\label{l:m=0:gen}
Let $f_0,\ldots,f_k$ be diffeomorphisms as in Lemma~\ref{l:genplusstrat}, so that the collection of stratifications $\Lambda_i$ is transverse.
For $k> 2 \dim(M)$, any stationary measure $m$ satisfies
$m(\Lambda_i) < 1/2$ for some $0\le i < k$. 
\end{lemma}

\begin{proof}
We will show that if $k$ is large enough, any probability measure on $M$
satisfies  $m(\Lambda_i) < 1/2$ for some $0\le i < k$. 
Assume $m$ is a probability measure on $M$ with
\begin{align*} 
m(\Lambda_i) &\ge 1/2 
\end{align*}
 for all $i$.
The smallest possible total measure on a union of stratifications
$\Lambda_{i_1} \cup \ldots \cup \Lambda_{i_l}$, varying over the probability measures on $M$, occurs if the measure is supported on the common intersection,
if this is nonempty. By transversality we have that for $l = \dim(M)$ this intersection, if nonempty, is zero-dimensional.
Also, the intersection of $\dim(M)+1$ different stratifications is always empty.

To calculate the smallest possible total measure on $\Lambda_0\cup\ldots\cup \Lambda_{k-1}$, 
suppose there is measure $1/2$ on each $\Lambda_i$. 
Consider sets  $S_j$ that occur as maximal intersections of sets $\Lambda_{i_1} \cap \ldots \cap \Lambda_{i_l}$;
meaning such that each intersection with a further stratification $\Lambda_i$ is empty.

Think of an assignment of mass $n_j = m(S_j)$ to the $S_j$'s. 
We seek the minimal total measure, among variation of such assignments. 
The argument will be combinatorial.
For the purpose of bounding the minimal total measure, 
we may assume that each collection of $l$ stratifications, $l = \dim(M)$, has a nonempty intersection 
by possibly adding imaginary intersections.
This indeed only adds possible assignments of mass (previous assignments assign zero measure to the new imaginary intersections), 
hence does not increase total measure.

The minimal possible total measure $\sum_j n_j$ on $\Lambda_0 \cup \ldots \cup \Lambda_{k-1}$
occurs at an equidistribution among the different disjoint sets $S_j$. 
At equidistribution each $S_{j}$ carries the same measure, say $n_j = n$.
To see that this gives the minimal possible total measure, 
first observe that a convex combination of assignments of mass, preserving the total measure,
is again an assignment of mass. 
Then by symmetry, permuting the sets $S_j$ and averaging assignments, one obtains the equidistribution.
This therefore has minimal total measure.

There are $\binom{k}{l}$ intersections $S_j$ and $\binom{k-1}{l-1}$ intersections in a fixed stratification $\Lambda_i$.  
At equidistribution, the measure of $S_{j}$ equals  $\frac{1}{2}  /  \binom{k-1}{l-1}$ as the measure of
$\Lambda_{i}$ is  $\frac{1}{2}$. 
The total measure $\sum_j n_j$ is  $ \frac{1}{2} \binom{k}{l} / \binom{k-1}{l-1} = \frac{k}{2l}$. 
This number is bigger than $1$ if $k>2l$.
The lemma follows. 
\end{proof}

We have now constructed iterated function systems $\text{IFS}\, (\mathcal{F})$, $\mathcal {F} = \{ f_0,\ldots,f_{k} \}$,
that are $C^1$-robustly minimal, so that moreover, for any ergodic stationary measure $m$,
the fiber Lyapunov exponents are all negative and (after relabeling the diffeomorphisms if necessary) $f_0$ admits a unique 
attracting fixed point $Q_0$ with $m (W^s(Q_0)) > 1/2$.

\subsection{Delta conditional measures}

Given an ergodic stationary measure $m$, let $\mu$ be the associated invariant measure for $F$ given by Proposition~\ref{p:furstenberg}.
We will establish that the corresponding conditional measures $\mu_\omega$ are delta measures, i.e. $K=1$ in 
Proposition~\ref{p:crauel}.
For this it suffices to establish that $\mu_\omega$ contains a point measure of mass larger than $1/2$, for $\nu$-almost all $\omega$.
Indeed, for each $0 \le c \le 1$,  the set of points $(\omega,x)$ for which $\mu_\omega  = c$ is an invariant set.
By ergodicity this set has $\mu$-measure  equal to $0$ or $1$.
This observation implies the following lemma.


\begin{lemma}\label{l:1/2=1}
Suppose $\mu$ is an ergodic measure for which $\mu_\omega$ contains a point measure of mass larger than $1/2$,
for $\nu$-almost all $\omega$.
Then there is a measurable function $X: \Sigma_{k+1} \to M$ so that
\[
\mu_\omega = \delta_{X(\omega)}.
\] 
\end{lemma}


 %
 %
 Let $d$ denote a distance function from a Riemannian structure on $M$. 
 Write $\Sigma = \Sigma^- \times \Sigma^+$ and $\omega = (\omega^-,\omega^+)$ for 
 $\omega \in  \Sigma^- \times \Sigma^+$.
 The Bernoulli measure $\nu$ on $\Sigma$ can also be written $\nu = \nu^- \times \nu^+$ on
 $\Sigma^- \times \Sigma^+$.
 
 \begin{lemma}\label{l:pesinR}
 For any $\varepsilon >0$, there are $\delta > 0$, $C>0$, $0<\lambda<1$ and a set 
 \[\mathcal{A}   \subset \Sigma_{k+1}\] 
 with $\nu (\mathcal{A})  > 1-\varepsilon$,
 so that for $\omega \in \mathcal{A}$, 
 $\{\omega\} \times M$ contains  a ball $B^s (\omega)$
 of radius $\delta$ 
with   
 \begin{align}\label{e:es}
 d \left( f^n_\omega (x_1),f^n_\omega (x_2) \right) &\le C \lambda^n d(x_1,x_2)
 \end{align}
 for all $n \in \mathbb{N}$, 
 whenever $x_1,x_2 \in B^s (\omega)$.
 Moreover, 
  \[\mathcal{A}  = \Sigma^- \times \mathcal{A}^+ \]
 for a set $\mathcal{A}^+ \subset \Sigma_{k+1}^+$.
 \end{lemma}
 
 \begin{proof}
 The existence of a set $\mathcal{A}\subset \Sigma_{k+1}$ so that \eqref{e:es} holds, follows from 
 the theory of nonuniform hyperbolicity 
 \cite[Lemma~10.5]{MR2068774}.
 
Write $\pi: \Sigma_{k+1} \to \Sigma_{k+1}^+$ for the natural projection.
 Note that the fiber coordinates of $F (\omega,x)$ 
 do not depend on $\omega^-$.
Hence,  
if $\mathcal{A}^+ = \pi \mathcal{A}$,  
when replacing $\mathcal{A}$ by $ \pi^{-1}  \mathcal{A}^+$, estimate \eqref{e:es} still applies.
 That is,  we may consider $\mathcal{A}$ as a product set $\Sigma^- \times \mathcal{A}^+$.
 \end{proof}
 
By Lemma~\ref{l:m=0:gen},
it is possible to take a closed subset 
$D \subset W^s(Q_0)$  so that
\begin{align*}
m(D) &> 1/2
\end{align*}
(after relabeling the diffeomorphisms if necessary).
 Recall that $d_\mathcal{M}$ is a metric on $\mathcal{M}$ that generates the weak star topology.
Let $\Delta \subset \mathcal{M}$ be the subset of probability measures on $M$ that 
assign at least mass $m (D)$ to some point, 
\begin{align*}
\Delta = \{ m \in \mathcal{M} \; \vert \; m (x) \ge m(D) \text{ for some } x\in M\}.
\end{align*}
Note that $\Delta$ is a closed subset of $\mathcal{M}$.
Let $\mathcal{A}$ be a subset of $\Sigma_{k+1}$ as provided by Lemma~\ref{l:pesinR}.


\begin{lemma}\label{l:L}
There exists $L\in \mathbb{N}$ so 
that for each $\omega \in \mathcal{A}$,
there exists $\mathcal{B}_{\omega^+} \subset \Sigma^-$ 
so that for $\zeta \in  \mathcal{B}_{\omega^+} \times \{\omega^+\}$,
\begin{align*}
f^L_{\sigma^{-L} \zeta}  (D) &\subset W^s(B).
\end{align*}
\end{lemma}

\begin{proof}
For any $r>0$, a sufficiently large iterate of $f_{0}$ maps $D$ into a neighborhood of radius $r$
of the attracting fixed point $Q_0$ of $f_{0}$. 
By minimality of $\text{IFS}\, \{ f_0,\ldots, f_k\}$, the set $\cup_{\omega\in D^\mathbb{N}} f^n_\omega (Q_0)$ intersects each open set.
Hence there is, for any $e>0$, an integer $L_1$ so that for any ball $B\subset M$ of diameter
$e$, there are symbols $a_1,\ldots,a_{L_1}$ with $f_{a_{L_1}} \circ \cdots \circ f_{a_1} (Q_0) \in B$.
Combining the above statements,  there is a composition $f_{a_{L_1}} \circ \cdots \circ f_{a_1} \circ f_{0}^{L_2}$ that maps $D$ into 
$B^s (\omega)$. 
We let $\mathcal{B}_{\omega^+}$ consist of the sequences in $\Sigma^-$ that end with these symbols.
Given $L_1$ we may choose $L_2$ so that $L = L_1 + L_2$ does not depend on $\omega$. 
This proves the lemma with $L = L_1 + L_2$.
\end{proof}

The uniform bound on the number of iterates $L$ in the above claim
implies that $\nu^- (\mathcal{B}_{\omega^+})$ is uniformly bounded away from zero.
Consequently the union 
\[\mathcal{B} = \cup_{\omega^+ \in \mathcal{A}^+} \mathcal{B}_{\omega^+} \times \{ \omega^+ \}\] 
has positive measure: \[\nu(\mathcal{B}) >0.\]
By ergodicity of $\nu$, for $\nu$-almost all $\omega$, its orbit under $\sigma^{-1}$ intersects $\mathcal{B}$.
For such $\omega$, Lemma~\ref{l:pesinR} and Lemma~\ref{l:L} yield
\[
\liminf_{n\to \infty} d_\mathcal{M}  ( (f^n_{\sigma^{-n} \omega})_* m , \Delta  ) = 0.
\]
By Proposition~\ref{p:furstenberg} and Lemma~\ref{l:1/2=1}, there is a measurable function $X: \Sigma_{k+1} \to M$ with
\begin{align*}
\lim_{n\to \infty} (f^n_{\sigma^{-n} \omega})_* m = \delta_{X(\omega)}.
\end{align*}
This concludes the proof of Proposition~\ref{p:measure}.


\subsection{Synchronization}

We continue with the statement of Theorem~\ref{t:synch} that describes synchronization (item~\ref{i:iv} in its statement).
For $\nu$-almost all $\omega \in \Sigma_{k+1}$, the fiber Lyapunov exponents at $(\omega, X(\omega))$ exist and are strictly negative. 
 Write $W^s(X(\omega))$ for the stable set of $X(\omega)$ inside the fiber 
 $\{ \omega \} \times M$;
 \[
 W^s( X (\omega)) =  \{  y \in M \; \vert \;  \lim_{n\to \infty} d ( f^n_\omega (y) , X(\sigma^n \omega) ) = 0 \}.
 \]
 The theory of nonuniform hyperbolicity, as in Lemma~\ref{l:pesinR}, yields the following.
 Write $D_\delta (X(\omega))$ for the $\delta$-ball around $X(\omega)$. 
 Then for all $\varepsilon>0$ there is $\delta>0$ so that
 \begin{align*} 
 S(\delta) &= \{ \omega \in \Sigma \; \vert \; D_\delta (X(\omega)) \subset W^s(X(\omega)) \}
 \end{align*}
 satisfies 
 \begin{align}\label{e:S} 
 \nu (S(\delta)) &> 1-\varepsilon.
 \end{align}
 Once orbits are in a $\delta$-ball $D_\delta (X(\omega))$ for $\omega \in S(\delta)$, 
 distances to the orbit of $X(\omega)$ decrease to zero, which we may assume to happen at a uniform rate as in \eqref{e:es}.


\begin{lemma}\label{l:opendense}
For $\nu$-almost all $\omega \in \Sigma_{k+1}$, $W^s(X(\omega))$ is an open and dense subset of $M$.
%
\end{lemma}

\begin{proof}
For $\nu$-almost all $\omega \in \Sigma_{k+1}$,
$W^s (X(\omega))$ is open. Indeed, take $y \in W^s(X(\omega))$. 
For $\nu$-almost all $\omega \in \Sigma_{k+1}$, $\sigma^n \omega \in S(\delta)$ for infinitely many positive integers $n$.
We may take $n$ large so that $\sigma^n \omega \in S(\delta)$ and 
$f^n_\omega (y) \in D_\delta (X( \sigma^n \omega)) \subset W^s (X(\sigma^n \omega))$.
By continuity of the diffeomorphisms $f_0, \ldots, f_k$, a small neighborhood of $y$ lies in $W^s (X(\omega))$. 


It remains to show that $W^s (X(\omega))$ is dense in $M$ for $\nu$-almost all $\omega\in \Sigma_{k+1}$.
We have that $(f^n_{\sigma^{-n} \omega})_* m$ converges to
$\delta_{X(\omega)}$, $\nu$-almost surely.
This implies convergence in measure, and since $\sigma$ leaves $\nu$ invariant,
also that $(f^n_{\omega})_* m $ converges to $\delta_{X(\sigma^n \omega)}$ in measure.
That is, for any $\varepsilon >0$,
\begin{align*}
\nu \{ \omega \in \Sigma_{k+1} \; \vert \; d_{\mathcal{M}} ((f^n_\omega)_* m , \delta_{X(\sigma^n \omega)})  > \varepsilon \} &\to 0
\end{align*}
as $n \to \infty$. Here, as before, $d_\mathcal{M}$ is a metric on $\mathcal{M}$ generating the weak star topology.
This in turn implies that for some subsequence $n_k \to \infty$,
\begin{equation}\label{e:subseq}
\nu \{ \omega\in \Sigma_{k+1} \; \vert \; \lim_{k\to\infty} d_\mathcal{M} ( (f^{n_k}_\omega)_*  m  , \delta_{X(\sigma^{n_k} \omega) }) = 0 \} = 1
\end{equation}
(see e.g. \cite[Theorem~II.10.5]{shi84}).

We combine this with the existence of stable sets around $X(\sigma^n \omega)$ to prove 
that $d_\mathcal{M} ((f^n_{\omega})_* m , \delta_{X  (\sigma^n \omega  )}) \to 0$ almost surely.
In more detail,  let
\begin{align*} 
\Gamma(\hat{\delta} , N ) &= \{ \omega \in \Sigma_{k+1} \; \vert \; d_\mathcal{M} ( (f^N_\omega)_* m , \delta_{X(\sigma^N \omega)}   ) < \hat{\delta}  \}.
\end{align*}
Now \eqref{e:subseq} implies that 
for any given $\varepsilon>0$, $\hat{\delta} >0$,  there is $N >0$ with 
\begin{align}\label{e:hatdelta}
\nu (\Gamma (\hat{\delta} ,N  ) ) &> 1 - \varepsilon.
\end{align}
A measure is close to a delta measure if most of the measure is in a small ball: 
for any $\varepsilon, \delta$ there is $\hat{\delta} >0$ so that $d_\mathcal{M} (\mu , \delta_x) < \hat{\delta}$ implies
$\mu ( D_\delta (x) ) > 1 - \varepsilon$.
So \eqref{e:hatdelta} gives that for any $\varepsilon>0, \delta >0$ there exists $N>0$ so that
\begin{align*}
\nu \{ \omega \in \Sigma_{k+1}  \; \vert \;  (f^N_\omega)_* m  ( D_\delta (X(\sigma^N\omega)) ) > 1 - \varepsilon   \} &> 1-\varepsilon. 
\end{align*}
With \eqref{e:S} we get that for all $\varepsilon >0$, there exists $\delta>0$ and $N >0$ so that 
 the set 
\begin{align*}
 T_\varepsilon &= \{ \omega \in \Sigma_{k+1}  \; \vert \; \text{for } n \ge N,  (f^n_\omega)_* m  ( D_\delta (X(\sigma^n\omega)) ) > 1 - \varepsilon 
  \} 
\end{align*}
satisfies $\nu (T_\varepsilon) > 1-\varepsilon$.

Let $U\subset \Sigma_{k+1}$ be the set of $\omega \in \Sigma_{k+1}$ with $\sigma^i \omega \in T_\varepsilon$  for 
infinitely many integers $i$  for each $\varepsilon$.
Note that $\nu (U) = 1$.
Suppose $\omega \in U$.
Take $y \in M$ and a small ball $B$ around it.  To prove that $W^s(X(\omega))$ is dense in $M$, we must show that $B$ contains  a point in $W^s (X(\omega))$. 
Note that $m (B) >0$ since $m$ has full support. For $\varepsilon >0$ small enough we have 
$m (B) > \varepsilon$. Therefore, for $\omega\in U$, there is $z \in B$ with $\sigma^i \omega \in T_\varepsilon$ and 
$f^{n+i} (z) \in   B_\delta (X(\sigma^{n+i}\omega)) \subset W^s (X(\sigma^{n+i} \omega))$, $n\ge N$.
\end{proof}

\subsection{Uniqueness of the stationary measure}

This section addresses the uniqueness of the stationary measure stated in item~\ref{i:ii} of Theorem~\ref{t:synch}.

\begin{lemma}\label{l:unique}
Assume the conditions of Theorem~\ref{t:synch}.
Then the stationary measure $m$ is the unique stationary measure. 
\end{lemma}

\begin{proof}
Let $m$ be a stationary measure with only negative fiber Lyapunov exponents and assume there exists a different
stationary measure $\hat{m}$. We may take $\hat{m}$ to be an ergodic  stationary measure.
By Proposition~\ref{p:furstenberg} there is a $F$-invariant measure $\hat{\mu}$ with marginal $\nu$ and conditional measures
$\hat{\mu}_\omega$ satisfying 
\begin{align}\label{e:furforhatmu}
\lim_{n\to \infty} (f^n_{\sigma^{-n} \omega})_* \hat{m} &= \hat{\mu}_\omega
\end{align}
for $\nu$-almost all $\omega \in \Sigma_{k+1}$.

Recall that for $\nu$-almost all $\omega \in \Sigma_{k+1}$,  $W^s (X(\omega))$ is open and dense (Lemma~\ref{l:opendense}).
We can therefore take $e_1$ and a subset $T \subset \Sigma_{k+1}$ with $\nu(T)>0$ so that for $\omega \in T$,
$W^s (X(\omega))$ contains a closed ball $B(\omega)$ of diameter $e_1$.
By Lemma~\ref{l:fullsupport}, $\hat{m}$ has full support and thus assigns positive measure to any open set $B \subset M$.
We can therefore take $e_2>0$ and decrease $T$ if necessary to find 
\begin{align}\label{e:e2}
\hat{m} (B(\omega)) &>  e_2
\end{align}
for $\omega \in T$.
By taking $T$ still smaller if needed, 
we may moreover assume that there are $e_3>0$ and $N>0$ so that
$f^n_{\sigma^{-n} \omega}  (B (\sigma^{-n} \omega)) $ is contained in
a ball of diameter $e_3$ around $X(\omega)$, 
if $n\ge N$  and $\sigma^{-n} \omega \in T$.
In particular $f^n_{\sigma^{-n} \omega}  (B (\sigma^{-n} (\omega)))$ converges to $X(\omega)$
if $\sigma^{-n} \omega \in T$ and $n \to \infty$.
For $\nu$-almost all $\omega \in \Sigma_{k+1}$, $\sigma^{-n} \omega \in T$ for infinitely many values of $n$.
For such $\omega$ we find by \eqref{e:e2} that
$(f^n_{\sigma^{-n} \omega})_*  \hat{m} (f^n_{\sigma^{-n} \omega}  (B (\sigma^{-n} (\omega))) > e_2$.
Since $f^n_{\sigma^{-n} \omega}  (B (\sigma^{-n} (\omega)))$ converges to $X(\omega)$ we get
that there is an accumulation point of $(f^n_{\sigma^{-n} \omega})_*  \hat{m}$
that assign positive measure to $ X(\omega)$.
%
%
By \eqref{e:furforhatmu}, 
\begin{align*}
\hat{\mu}_\omega  (X(\omega)) &>0.
\end{align*}
However, by ergodicity, $\hat{\mu} \ne \mu$ implies $\hat{\mu}_\omega (X(\omega)) = 0$ for $\nu$-almost all $\omega \in \Sigma_{k+1}$.
A contradiction has been derived and the lemma is proved.
\end{proof}

We proved Theorem~\ref{t:synch}.

\section{Iterated function systems on compact surfaces}\label{s:2}

On compact two-dimensional surfaces one obtains Theorem~\ref{t:synch} with iterated function systems generated by two diffeomorphisms.

\begin{proposition}\label{p:synchM2}
Let $M$ be a compact two-dimensional surface.
There is a $C^1$ open set of iterated function systems
generated by $C^2$ diffeomorphisms $f_0,f_{1}$ on $M$
with the following properties.
 
\begin{enumerate}[label=(\roman*)]
 \item the iterated function systems $\text{IFS}\, (\{ f_0,f_1\})$ and $\text{IFS}\, (\{f_0^{-1},f_1^{-1}\})$ are minimal;
 \item the iterated function system admits a unique  a stationary measure $m$ of full support;
 \item the iterated function system  has only negative fiber Lyapunov exponents; 
 \item for almost all $\omega \in \Sigma^+$ there is an open, dense set $W(\omega)\subset M$  so that 
  \begin{align*} 
   \lim_{n\to \infty} d( f^n_{\omega} (x) , f^n_{\omega} (y))
  =0
  \end{align*}
  for  $x,y \in W(\omega)$.  
\end{enumerate}
\end{proposition}

The proof of Theorem~\ref{t:synch} can be followed, with Lemma~\ref{l:m=0:gen} being replaced by
Lemma~\ref{l:m=0:M} below. This lemma uses that $\text{IFS}\, (\{f_0^{-1},f_1^{-1}\})$
is minimal. The construction can be done so that both $\text{IFS}\, (\{ f_0,f_1\})$ and $\text{IFS}\, (\{f_0^{-1},f_1^{-1}\})$ are minimal.

\begin{lemma}
\label{l:m=0}
A stationary measure $m$ is atom free.
\end{lemma}

\begin{proof}
Following \cite[Proposition~6]{1086.37026}, 
we claim that $m$ is atom free.
Take otherwise a point $p$ with maximal positive mass.
Then $f_i^{-1} (p)$, $i=0,1$, all have the same mass.
Taking further inverse images leads to an infinite set of points
(a finite set would contradict minimality $\text{IFS}\,\{f_0^{-1},f_1^{-1}\}$)  with the same positive mass, a contradiction.
\end{proof}


Recall that $f_0$ has an attracting fixed point $Q_0$ with $W^s (Q_0)$ open and dense in $M$. 
As before, $\Lambda_0 = M \setminus W^s (Q_0)$ is a stratification. 
For an open set of diffeomorphisms $f_i$, $i=0,1$,  $f_1^{-1} (\Lambda_0) $ is transverse to 
$\Lambda_0$. 

\begin{lemma}
\label{l:m=0:M}
Assume that $f_1^{-1} (\Lambda_0) $ is transverse to 
$\Lambda_0$.
A stationary measure $m$ then satisfies 
\begin{align*} 
m (\Lambda_0) &< 1/2. 
\end{align*}
\end{lemma}

\begin{proof}
Write $\alpha = m(\Lambda_0)$.
Since $m$ has full support, $\alpha <1$.
 Since $M$ is two-dimensional,  $\Lambda_0$  intersects $f_1^{-1}(\Lambda_0)$
in a set of dimension zero, if it intersects, so in a set of stationary measure zero by Lemma~\ref{l:m=0}.
Therefore $m(   \Lambda_0 \cup  f^{-1}_1 (\Lambda_0) ) =  m (\Lambda_0) +  m( f^{-1}_1 (\Lambda_0)) = 
\alpha + f_1 m(\Lambda_0)$, so that
$\alpha + f_1 m(\Lambda_0) <1$.

The measure $m$ being stationary implies
$\alpha = p_0 \alpha + p_1 (f_1)_* m(\Lambda_0)$.
So \[\alpha =   (f_1)_* m(\Lambda_0) \]
and $\alpha + (f_1)_* m(\Lambda_0) <1$ implies $\alpha< 1/2$.
\end{proof}


\subsection{An example on the torus}


Figure~\ref{f:synchonT2} gives a numerical demonstration of synchronization in a specific iterated function system on the two dimensional torus. 
Here we provide a robust synchronization result for small perturbations of this specific iterated function system,
partly to illustrate the results of this paper.

\begin{proposition}\label{p:example}
Let $\text{IFS}\,\{ f_0,f_1\}$ on the two dimensional torus $\mathbb{T}^2 = \mathbb{R}^2 / \mathbb{Z}^2$ be defined by
\begin{align*}
f_0(x,y) &= \left(x + \frac{1}{4\pi} \sin (2\pi x) , y + \frac{1}{4\pi} \sin (2\pi y)\right),
\\
f_1 (x,y) &= \left(x + \frac{1}{\pi} , y + \frac{1}{\sqrt{2}\pi}\right) \mod 1.
\end{align*}
Assume the diffeomorphisms $f_0,f_1$ are picked independently with positive probabilities $p_0, p_1 = 1-p_0$.

Then there is a $C^1$ neighborhood of $\text{IFS}\,\{ f_0,f_1\}$ so that
any iterated function system  $\text{IFS}\,\{ g_0,g_1\}$ with $C^2$ diffeomorphisms from it, 
is minimal and displays synchronization.
\end{proposition}

\begin{proof}
Observe that $\text{IFS}\,\{ f_0,f_1\}$ is a product of iterated function systems on the circle $\mathbb{T}$:
$\text{IFS}\,\{ f_{0,1},f_{1,1}\}$  with
\begin{align*}
f_{0,1}(x) = \left(x + \frac{1}{4\pi} \sin (2\pi x) \right),  \;
f_{1,1} (x) = \left(x + \frac{1}{\pi} \right) \mod 1.
\end{align*}
and   $\text{IFS}\,\{ f_{0,2},f_{1,2}\}$
with 
\begin{align*}
f_{0,2}(y) = \left(y + \frac{1}{4\pi} \sin (2\pi y)\right), \;
f_{1,2} (y) = \left( y + \frac{1}{\sqrt{2}\pi}\right) \mod 1.
\end{align*}
These iterated function systems on the circle are clearly minimal, possess a unique stationary measure $m_1,m_2$
and display synchronization \cite{1086.37026}.
Moreover,  the fiber Lyapunov exponents are negative \cite[Theorem 7.1]{klevol14}.


For any $\varepsilon>0$, $u \in \mathbb{T}$ near $0$, there exists $n \in \mathbb{N}$, so that 
$ f_{0,1} \circ f_{1,1}^n$ has an attracting fixed point within distance $\varepsilon$ of $1/2 + u$.
Similarly for $f_{0,2}, f_{1,2}$.
Because $f_1$ is a minimal diffeomorphism, we conclude that
for any $\varepsilon>0$, $(u,v) \in \mathbb{T}^2$ near $(0,0)$, there exists $n \in \mathbb{N}$, so that 
$ f_0 \circ f_1^n$ has an attracting fixed point within $\varepsilon$ of $(1/2 + u, 1/2 + v)$.

Let $B$ be a small ball around the attracting fixed point $(1/2,1/2)$ of $f_0$.
Then $f_0 (B)$ is strictly contained inside $B$.
Take vectors $(u_1,v_1),\ldots (u_l,v_l)$ so that $B \subset \cup_{i=1}^l  f_0(B) + (u_i,v_i)$.
With the above observation it is easily seen that there are numbers $n_1,\ldots,n_l$ so that
\begin{align}\label{e:B1} 
B &\subset f_0 (B) \cup \cup_{i=1}^l   f_0 \circ f_1^{n_i} (B). 
\end{align}
Note further that there is a finite number $N$ so that 
\begin{align}\label{e:B2}
\mathbb{T}^2 &\subset \cup_{i=0}^N f_1^i (B).
\end{align}
Since \eqref{e:B1} and \eqref{e:B2} are robust under small $C^1$ perturbations of $f_0,f_1$,
it follows that $\text{IFS}\,\{ f_0,f_1\}$ is robustly minimal on $\mathbb{T}^2$.

Now $\text{IFS}\,\{ f_0,f_1\}$ has a stationary measure $m = m_1 \times m_2$ with negative fiber Lyapunov exponents.
By Lemma~\ref{l:unique}, $m$ is the unique stationary measure for   $\text{IFS}\,\{ f_0,f_1\}$.
Since $m_1$ and $m_2$ are nonatomic \cite{1086.37026}, we find $m(W^s ((1/2,1/2)) =1 $ for
the basin of attraction $W^s ((1/2,1/2))$ of the
stable fixed point $(1/2,1/2)$ of $f_0$.

Consider now $C^2$ diffeomorphisms $g_0,g_1$ that are $C^1$ close to $f_0,f_1$.
Then by Lemma~\ref{l:c0}, any stationary measure for $\text{IFS}\,\{ g_0,g_1\}$ is close to $m$ in the weak star topology.
So $g_0$ has an attracting fixed point whose basin has stationary measure at least $1/2$.
It now follows from Theorem~\ref{t:synch} that $\text{IFS}\,\{ g_0,g_1\}$ has a unique stationary measure with negative fiber Lyapunov exponents,
and displays synchronization. 
\end{proof}

\def\cprime{$'$}

\end{document}